\newcommand{\E}{{\bf{E}}}
\newcommand{\PP}{{\bf{P}}}
\newtheorem{tm}{Theorem}
\newtheorem{lem}{Lemma}
\begin{document}

\bibliographystyle{plain}
\parindent=0pt
\centerline{\LARGE \bfseries Degree distribution of an inhomogeneous}
\centerline{\LARGE \bfseries  random intersection graph}

\par\vskip 3.5em

\centerline{Mindaugas  Bloznelis, Julius Damarackas}

\vglue2truecm

\centerline{Vilnius University, Faculty of mathematics and informatics,} 
\centerline{Naugarduko 24, Vilnius,
03225, Lithuania}


\bigskip





\begin{abstract}
We show the asymptotic degree distribution of the typical vertex of a sparse inhomogeneous random intersection graph. 

\par
\end{abstract}

\smallskip
{\bfseries key words:}  degree distribution, random graph, random intersection graph, power law
\par\vskip 2.5em

2000 Mathematics Subject Classifications:   05C80,  05C07,  05C82

\par\vskip 2.5em

\section{Introduction}

Let $X_1,\dots, X_m, Y_1,\dots, Y_n$ be independent non-negative random variables such that
each $X_i$ has the  probability distribution $P_1$ and each $Y_j$ has the probability distribution
$P_2$. Given realized values $X=\{X_i\}_{i=1}^m$ and $Y=\{Y_j\}_{j=1}^n$
we define the random bipartite graph $H_{X,Y}$ with the bipartition $V=\{v_1,\dots, v_n\}$, $W=\{w_1,\dots, w_m\}$, 
where  edges $\{w_i,v_j\}$ are inserted 
with probabilities
$p_{ij}=\min\{1, X_iY_j(nm)^{-1/2}\}$ independently 
for each $\{i,j\}\in [m]\times [n]$.
The 
 {\it inhomogeneous} random intersection graph  $G(P_1,P_2, n,m)$ defines the adjacency relation on the  vertex 
set $V$: vertices $v',v''\in V$ are declared adjacent (denoted $v'\sim v''$) whenever
$v'$ and $v''$ have a common neighbour in $H_{X,Y}$. 

 The degree distribution of the typical vertex of the random  graph  $G(P_1,P_2, n,m)$ has been first considered  by
Shang \cite{Shang2010}.
The proof of the main result of  \cite{Shang2010} contains  gaps and the  result is incorrect in the regime where
 $m/n\to\beta\in (0,+\infty)$ as $m,n\to+\infty$. 
 We remark that this regime is of particular importance, because it leads to inhomogeneous graphs
 with the clustering property: the clustering coefficient $\PP(v_1\sim v_2|v_1\sim v_3, v_2\sim v_3)$ is bounded away from zero
 provided that
  $\E X_1^3<\infty$ and $\E Y_1^2<\infty$, see \cite{BloznelisKurauskas2012}. 
The aim of the present paper is to show the asymptotic degree distribution in the case where $m/n\to\beta$ for 
some $\beta\in (0,+\infty)$.

 We consider a sequence of graphs $\{G_n=G(P_1,P_2, n,m)\}$, where $m=m_n\to +\infty$ as $n\to \infty$, and
where $P_1, P_2$ do not depend on $n$.
We denote $a_i=\E X_1^i$, $b_i=\E Y_1^i$.
 By $d(v_j)=d_{G_n}(v_j)$ we denote the degree of a vertex $v_j$ in $G_n$ (the number of vertices adjacent to $v_j$ in $G_n$). We remark that for every $n$ the random variables
 $d_{G_n}(v_1),\dots, d_{G_n}(v_n)$ are identically distributed. In Theorem \ref{T1} below we show the asymptotic distribution of $d(v_1)$.

\begin{tm}\label{T1} Let $m,n\to\infty$. 

(i) Assume that $m/n\to 0$. Suppose that  that $\E X_1<\infty$. Then $\PP(d(v_1)=0)=1-o(1)$.

(ii) Assume that $m/n\to \beta$ for some $\beta\in (0,+\infty)$.
 Suppose  that $\E X_1^2<\infty$ and $\E Y_1<\infty$. Then  $d(v_1)$ converges in distribution to the random variable
\begin{equation}\label{d*1}
d_*=\sum_{j=1}^{\Lambda_1}\tau_j, 
\end{equation}
where $\tau_1,\tau_2,\dots$ are independent 
and identically distributed random variables independent of the random variable $\Lambda_1$.
They are distributed as follows. For $r=0,1,2,\dots$, we have
\begin{equation}\label{d*1++}
\PP(\tau_1=r)=\frac{r+1}{\E\Lambda_2}\PP(\Lambda_2=r+1)
\qquad
{\text{and}}
\qquad
\PP(\Lambda_i=r)=\E \,e^{-\lambda_i}\frac{\lambda_i^r}{r!},
\qquad
i=1,2.
\end{equation}
Here $\lambda_1=Y_1a_1\beta^{1/2}$ and $\lambda_2=X_1b_1\beta^{-1/2}$.

(iii) Assume that $m/n\to+\infty$.
 Suppose that  $\E X_1^2<\infty$ and $\E Y_1<\infty$. 
Then  $d(v_1)$ converges in distribution to a random variable
$\Lambda_3$ having the probability distribution
\begin{equation}\label{d*2}
\PP(\Lambda_3=r)=\E e^{-\lambda_3}\frac{\lambda_3^r}{r!},
\qquad
r=0,1,\dots.
\end{equation}
Here $\lambda_3=Y_1a_2b_1$.
\end{tm}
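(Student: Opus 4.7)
The proof splits along the three regimes of $m/n$. Case (i) can be handled separately by a first-moment argument, and cases (ii)--(iii) share a common reduction to a counting statistic whose limit distribution is identified via probability generating functions.

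For (i), I observe that $d(v_1) \leq N := \sum_{i=1}^m I(w_i \sim v_1)$, so it suffices to show $\PP(N \geq 1) \to 0$. Conditioning on $X = (X_i)$ and $Y_1$, Markov's inequality gives $\PP(N \geq 1 \mid X, Y_1) \leq \min\bigl(1, Y_1 \bar X \sqrt{m/n}\bigr)$ with $\bar X = m^{-1}\sum_i X_i$. Since $\bar X \to a_1$ a.s.\ (by $\E X_1 < \infty$) and $\sqrt{m/n} \to 0$, the right-hand side tends a.s.\ to $0$ while remaining bounded by $1$, and dominated convergence concludes.

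For (ii) and (iii), write $B_{ij} = I(w_i \sim v_j)$, $T_j = \sum_i B_{i1}B_{ij}$ (common attributes of $v_1$ and $v_j$), and $T = \sum_{j=2}^n T_j$. Then $d(v_1) = \sum_{j \geq 2} I(T_j \geq 1) \leq T$ with $0 \leq T - d(v_1) \leq \sum_{j \geq 2} \binom{T_j}{2}$, and a second-factorial-moment computation reduces the problem to determining the limit law of $T$. For the latter I would compute
\begin{equation*}
\E[s^T \mid X, Y] = \prod_{i=1}^m \bigl[1 + p_{i1}(Q_i(s) - 1)\bigr], \qquad Q_i(s) = \prod_{j \geq 2}\bigl(1 + p_{ij}(s-1)\bigr),
\end{equation*}
and take expectations conditional on $Y_1$. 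In case (ii), a law-of-large-numbers argument on the $Y_j$'s yields $Q_i(s) \to \exp\bigl(X_i b_1 \beta^{-1/2}(s-1)\bigr)$ --- the PGF of $\Lambda_2$ given $X_i$ --- and approximating the outer product by $\exp\bigl(\sum_i p_{i1}(Q_i(s) - 1)\bigr)$, then applying a LLN on the $X_i$'s (using $\E X_1^2 < \infty$ and the bound $|X_1(e^{c(s-1)X_1} - 1)| \leq X_1$ for $s \in [0,1]$), gives
\begin{equation*}
\E[s^T \mid Y_1] \to \exp\!\Bigl(Y_1 \beta^{1/2}\, \E_{X_1}\bigl[X_1\bigl(e^{X_1 b_1 \beta^{-1/2}(s-1)} - 1\bigr)\bigr]\Bigr).
\end{equation*}
Using that $\tau_1 + 1$ is the size-biased $\Lambda_2$ (from \eqref{d*1++}), the PGF of $d_* = \sum_{j=1}^{\Lambda_1}\tau_j$ simplifies to exactly this expression after integrating over $Y_1$. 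Case (iii) specialises the same computation: now $p_{i1}, \sum_j p_{ij} \to 0$, so $Q_i(s) - 1 \approx (s-1)\sum_j p_{ij}$ and $\sum_i p_{i1}(Q_i(s) - 1) \to (s-1)Y_1 a_2 b_1$, yielding the mixed Poisson PGF with rate $\lambda_3$.

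The main obstacle I anticipate is the reduction $\E[T - d(v_1)] \to 0$ in regime (ii). Since $\E Y_1^2$ may be infinite, one cannot simply drop the minimum in $p_{ij}$ to bound $\E\binom{T_j}{2}$; instead a truncation of the $Y_j$'s, the elementary inequality $\min(1,a)\min(1,b) \leq \min(1,ab)$, and the hypothesis $\E X_1^2 < \infty$ must be combined to control the second factorial moment.
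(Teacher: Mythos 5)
Your strategy is sound and reaches the correct limits, but by a genuinely different route from the paper, so let me compare. Both arguments first replace $d(v_1)$ by the same statistic: your $T=\sum_i B_{i1}\sum_{j\ge 2}B_{ij}$ is exactly the paper's $L=\sum_i u_i{\mathbb I}_i$. The paper, however, only proves $\PP(d(v_1)\ne L)=o(1)$ (Lemma 1), via a conditional union bound whose conditional probability is trivially capped at $1$, so bounded convergence applies even though $\E Y_1^2$ may be infinite; you propose the stronger statement $\E[T-d(v_1)]\to 0$, which does hold under the stated hypotheses, but only if, as you anticipate, one keeps the minimum throughout: bounding $\E[T_j-{\mathbb I}_{\{T_j\ge 1\}}\mid X,Y]$ by $S_j\min(1,S_j)$ with $S_j=\sum_i p_{i1}p_{ij}\le {\hat a}_2 Y_1Y_j/n$ and finishing with dominated convergence for $\E[Y_1Y_j\min(1,cY_1Y_j/n)]$; the naive second-factorial-moment bound with the minimum dropped fails for heavy-tailed $Y$, so your flagged obstacle is real but surmountable. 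For the limit law, the paper runs a coupling/total-variation chain ($L\to L_0\to L_1\to L_2\to L_3\to{\bar L}$) built on LeCam's inequality and finishes with a characteristic-function computation for the compound Poisson mixture, whereas you write the exact conditional PGF $\prod_i[1+p_{i1}(Q_i(s)-1)]$ and pass to the limit by laws of large numbers; your identification of the limit with the PGF of $d_*$ through the size-biased law of $\tau_1$ is correct, since $\lambda_1(\E s^{\tau_1}-1)=Y_1\beta^{1/2}\E[X_1(e^{X_1b_1\beta^{-1/2}(s-1)}-1)]$. The PGF route is shorter and avoids constructing explicit couplings; the paper's route gives quantitative total-variation bounds at each intermediate step. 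Two details you would still need, both of the type you already invoke: replacing $Q_i(s)$ by $\exp((s-1)\sum_j\lambda_{ij})$ costs $\sum_i\lambda_{i1}\sum_j\lambda_{ij}\min(1,\lambda_{ij})=Y_1S_{XY}$, i.e.\ the paper's bound (7) of Lemma 1 (which, incidentally, follows directly from $\E S_{XY}\le \E[X_1^2Y_2\min(1,X_1Y_2/\sqrt{mn})]\to 0$ by dominated convergence, so the paper's $\varphi$-truncation is not essential); and in case (iii) the linearization $Q_i(s)-1\approx(s-1)\sum_j p_{ij}$ is not uniform in $i$, so a truncation of the large $X_i$ at level $\varepsilon\sqrt{m/n}$ (the paper's indicators ${\mathbb I}'_k$) is needed before summing. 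Finally, in case (i) the pointwise inequality $d(v_1)\le N$ is false (a single attribute can produce many neighbours); what your argument actually uses, the inclusion $\{d(v_1)\ge 1\}\subseteq\{N\ge 1\}$, is correct, and the rest of (i) coincides with the paper's argument.
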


 {\it Remark 1.} The probability distributions $P_{\Lambda_i}$ of $\Lambda_i$, $i=1,2,3$, are Poisson mixtures. One way to
sample 
from the  distribution $P_{\Lambda_i}$  is to 
generate random variable  $\lambda_i$ and then, given $\lambda_i$, 
to generate Poisson random variable 
with the parameter $\lambda_i$. The realized value of the Poisson random variable 
has the  distribution $P_{\Lambda_i}$. 

{\it Remark 2}. The asymptotic distributions (\ref{d*1}) and (\ref{d*2}) admit  heavy tails.
In the case (ii) we obtain a power law asymptotic degree distribution (\ref{d*1}) provided that at least one 
of $P_1$ and $P_2$ has a power law and $P_1(0),P_2(0)<1$.
In the case (iii) 
we obtain a power law asymptotic degree distribution (\ref{d*2}) provided that  $P_2$ has a power law.

{\it Remark 3}. Since the second moment $a_2$ does not show up in (\ref{d*1}), (\ref{d*1++}) we expect that in the case 
(ii) the second moment condition $\E X_1^2<\infty$ is redundant and  could perhaps be replaced by the weaker first moment condition $\E X_1<\infty$.

 Random intersection graphs 
 have 
 attracted considerable attention in the recent literature, see, e.g., 
 \cite{Barbour2011},
 \cite{behrisch2007},
 \cite{Blackburn2009},
 \cite{Bradonjic2010},
 \cite{Britton2008},
 \cite{eschenauer2002},
 \cite{Rybarczyk2011}. 
 Starting with the paper  
  by Karo\'nski et al \cite{karonski1999}, see also \cite{Singer1995},
  where the  case  of degenerate distributions $P_1=P_{1n}$, $P_2=P_{2n}$ depending on $n$ was considered 
  (i.e., $\PP_{1n}(c_n)=\PP_{2n}(c_n)=1$,  for some $c_n>0$), 
 several more complex random intersection graph models were later introduced by Godehardt and Jaworski \cite{godehardt2001}, Spirakis et al. \cite{Spirakis2004}, Shang \cite{Shang2010}.  
 The asymptotic degree distribution  for various random intersection graph models was shown in  
  \cite{Bloznelis2008}, \cite{Bloznelis2010}, \cite{Bloznelis2010+}, \cite{Bloznelis2011+},  \cite{Deijfen}, \cite{JKS}, \cite{JaworskiStark2008},
  \cite{Rybarczyk-degree2011}, \cite{stark2004}.

\section{Proofs}

Before the proof we introduce some notation and give two auxiliary lemmas.

The event that the edge $\{w_i, v_j\}$ is present in $H=H_{X,Y}$ is denoted $w_i\to v_j$. We denote
\begin{displaymath}
 {\mathbb I}_{ij}={\mathbb I}_{\{w_i\to v_j\}},
\qquad
{\mathbb I}_i={\mathbb I}_{i1},
\qquad
u_i=\sum_{2\le j\le n}{\mathbb I}_{ij},
\qquad
L=\sum_{i=1}^mu_i{\mathbb I}_i.
\end{displaymath}
We remark, that $u_i$ counts all neighbours of $w_i$ in $H$ 
belonging to the set $V\setminus \{v_1\}$. Denote
\begin{eqnarray}\nonumber
 &&
{\hat a}_k=m^{-1}\sum_{1\le i\le m}X^k_i,
\qquad
{\hat b}_k=n^{-1}\sum_{2\le j\le n}Y^k_j,
\\
\label{S-XY}
&&
\lambda_{ij}=\frac{X_iY_j}{\sqrt{mn}},
\qquad
 S_{XY}=\sum_{i=1}^m\sum_{j=2}^n\frac{X_i}{\sqrt{nm}}\lambda_{ij}\min\{1,\lambda_{ij}\},
\end{eqnarray}
and introduce the event ${\cal A}_1=\{\lambda_{i1}<1, \, 1\le i\le m\}$.
By ${\PP}_1$ and $\E_1$ we denote the conditional probability and conditional expectation given $Y_1$.
By ${\tilde \PP}$ and ${\tilde \E}$ we denote the conditional probability and conditional expectation 
given $X,Y$.
By  $d_{TV}(\zeta,\xi)$ we denote the total variantion 
distance between the probability distributions
of
 random variables $\zeta$ and $\xi$. In the case where $\zeta,\xi$ and $X, Y$ are defined on the same probability space we denote 
 by 
 ${\tilde d}_{TV}(\zeta,\xi)$ the total 
variation distance between the conditional distributions of $\zeta$ and $\xi$ given $X,Y$.

In the proof  below we use the following 
simple fact about the convergence of a sequence of random variables $\{\varkappa_n\}$: 
\begin{equation}\label{fact}
 \varkappa_n=o_P(1),
\quad
\E \sup_n\varkappa_n<\infty
\quad
\Rightarrow
\quad
\E\varkappa_n=o(1).
\end{equation}
We remark that the condition $\E \sup_n\varkappa_n<\infty$ (which assumes implicitly that
all $\varkappa_n$ are defined on the same probability space) can be replaced by the more 
resctrictive condition that there exists a constant $c>0$ such that 
$|\varkappa_n|\le c$, for all $n$. In the latter case we do not need all $\varkappa_n$ to be defined on the same probability space.
In particular, given a sequence of bivariate random vectors $\{(\eta_n,\theta_n)\}$ such that, for  every $n$ and $m=m_n$ random variables $\eta_n$, $\theta_n$ and
 $\{X_i\}_{i=1}^m$, $\{Y_j\}_{j=1}^n$ are defined on the same probability space, we have 
\begin{displaymath}
{\tilde d}_{TV}(\eta_n,\theta_n)=o_P(1)
\
\Rightarrow
\
d_{TV}(\eta_n,\theta_n)\le \E{\tilde d}_{TV}(\eta_n,\theta_n) =o(1).
\end{displaymath}

\begin{lem}\label{d-L} Assume that $\E X_1^2<\infty$ and $\E Y_1<\infty$.
We have as $n,m\to+\infty$
\begin{eqnarray}\label{d-L21}
&&
\PP(d(v_1)\not=L)=o(1),
\\
\label{d-L22}
&&
\PP({\cal A}_1)=1-o(1),
\\
\label{d-L23}
&&
S_{XY}=o_P(1),
\qquad
\E S_{XY}=o(1).
\end{eqnarray}

\end{lem}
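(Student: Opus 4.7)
The plan is to treat the three statements in turn, each by a first-moment estimate culminating in dominated convergence; the dominating functions will be multiples of $X_1^2 Y_1$ or $X_1 Y_1$, both integrable by the independence of the $X$'s and $Y$'s together with the hypotheses $\E X_1^2<\infty$ and $\E Y_1<\infty$.

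For \eqref{d-L22}, a union bound over $i$ followed by Markov's inequality yields
\[
\PP(\mathcal A_1^c)\le m\,\PP(X_1 Y_1\ge\sqrt{mn})\le \sqrt{m/n}\,\E\big[X_1 Y_1\,\mathbb I\{X_1 Y_1\ge\sqrt{mn}\}\big].
\]
The right-hand expectation tends to $0$ by dominated convergence (since $X_1 Y_1\in L^1$ and the indicator vanishes a.s.), while $\sqrt{m/n}=O(1)$ in the regime $m/n\to\beta$. For \eqref{d-L23}, identical distribution of the summands of $S_{XY}$ collapses the double sum,
\[
\E S_{XY}=\frac{(n-1)m}{mn}\,\E\Big[X_1^2 Y_1\min\!\big\{1,\,X_1 Y_1/\sqrt{mn}\big\}\Big],
\]
and the bound $X_1^2 Y_1\in L^1$ together with the a.s.\ convergence of the minimum to $0$ force this to $o(1)$ by dominated convergence; Markov's inequality for the nonnegative $S_{XY}$ then gives $S_{XY}=o_P(1)$.

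The main step is \eqref{d-L21}. Set $\alpha_j:=\sum_{i=1}^m \mathbb I_i\mathbb I_{ij}$; then $L=\sum_{j\ge 2}\alpha_j$, $d(v_1)=\sum_{j\ge 2}\mathbb I\{\alpha_j\ge 1\}$, and $L\ge d(v_1)$ with $\{L\ne d(v_1)\}=\{\exists j\ge 2:\alpha_j\ge 2\}$, so it suffices to show $\PP(\exists j\ge 2:\alpha_j\ge 2)=o(1)$. Given $X,Y$, $\alpha_j$ is a sum of independent Bernoulli variables with means $p_{i1}p_{ij}$, so a union bound over unordered pairs of successes gives
\[
\tilde\PP(\alpha_j\ge 2)\le \min\!\big\{1,\tfrac12\mu_j^2\big\},\qquad \mu_j:=\sum_{i=1}^m p_{i1}p_{ij}.
\]
Using $p_{i1}p_{ij}\le\lambda_{i1}\lambda_{ij}$ one obtains $\mu_j\le Y_1 Y_j\hat a_2/n$, and hence with $A:=Y_1^2\hat a_2^2/(2n^2)$,
\[
\tilde\PP(\alpha_j\ge 2)\le \min\{1,\,AY_j^2\}.
\]
Conditioning on $(Y_1,\hat a_2)$ and using that $Y_2,\dots,Y_n$ are iid copies of $Y_1$ independent of $(Y_1,\hat a_2)$, a union bound over $j$ reduces the task to showing $(n-1)\,\E[\min\{1,AY^2\}\mid Y_1,\hat a_2]\to 0$ in expectation, where $Y$ is an independent copy of $Y_1$.

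The crux, and the principal obstacle, is the tail estimate
\[
\E\min(1,AY^2)=o(\sqrt A)\quad\text{as }A\to 0,
\]
which is \emph{not} available as $A\,\E Y^2$ since $\E Y^2$ may be infinite. It follows from $\E Y<\infty$, which implies $y\PP(Y>y)\to 0$; hence $\PP(Y>T)=o(1/T)$ and, by integration by parts, $\E[Y^2;Y\le T]=2\int_0^T y\PP(Y>y)\,dy=o(T)$; splitting $\E\min(1,AY^2)$ at $T=1/\sqrt A$ delivers the claim. Substituting back, $(n-1)\,\E[\min\{1,AY^2\}\mid Y_1,\hat a_2]=(Y_1\hat a_2/\sqrt 2)\cdot o(1)$ pointwise in $(Y_1,\hat a_2)$ as $n\to\infty$, and is uniformly dominated by a constant multiple of $Y_1\hat a_2$. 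The latter is integrable since $Y_1$ and $\hat a_2$ are independent with $\E Y_1<\infty$ and $\E\hat a_2=a_2$, so a final application of dominated convergence supplies the required $o(1)$ and completes the proof.
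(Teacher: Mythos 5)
Your treatments of (\ref{d-L21}) and (\ref{d-L23}) are essentially sound, but your proof of (\ref{d-L22}) has a genuine gap: the bound $\PP({\overline{\mathcal A}}_1)\le\sqrt{m/n}\,\E\bigl[X_1Y_1{\mathbb I}_{\{X_1Y_1\ge\sqrt{mn}\}}\bigr]$ only yields $o(1)$ when $m/n$ stays bounded, and you explicitly invoke the regime $m/n\to\beta$. The lemma, however, is stated for arbitrary $m,n\to+\infty$ and is used by the paper precisely in case (iii), where $m/n\to+\infty$ (the bound ${\mathbb I}_{{\overline{\mathcal A}}_1}=o_P(1)$ enters the proofs of the first bounds of (\ref{DeltaT}), (\ref{TL1}) and of the second relation of (\ref{TL2})). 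An unconditional first-moment Markov bound in the product $X_1Y_1$ cannot be pushed through that regime under the stated hypotheses (only $\E Y_1<\infty$ is assumed, so you cannot trade the factor $\sqrt{m/n}$ against a second moment of $Y_1$). The missing idea is the paper's conditioning device: condition on $Y_1$ and apply Markov's inequality to $X_i^2$, which gives $\PP_1({\overline{\mathcal A}}_1)\le n^{-1}a_2Y_1^2=o_P(1)$ with no $m/n$ factor at all, and then conclude $\PP({\overline{\mathcal A}}_1)=o(1)$ from the bounded-convergence fact (\ref{fact}), since the conditional probability is bounded by $1$.

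For the record on the other two parts: your proof of (\ref{d-L23}) — collapsing the double sum by identical distribution and applying dominated convergence to $\E\bigl[X_1^2Y\min\{1,X_1Y/\sqrt{mn}\}\bigr]$ with dominating variable $X_1^2Y\in L^1$ — is valid in all regimes and is in fact shorter than the paper's construction of the slowly growing submultiplicative function $\varphi$. Your proof of (\ref{d-L21}) takes a somewhat different route from the paper's: you truncate the pairwise union bound at $1$ and prove the tail estimate $\E\min(1,AY^2)=o(\sqrt A)$ from $\E Y<\infty$, whereas the paper bounds ${\tilde\PP}(d(v_1)\ne L)\le n^{-1}{\hat b}_2Y_1^2Q_X$ and uses the (equivalent, unproved there) fact $n^{-1}{\hat b}_2=o_P(1)$; both are correct. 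One small repair in your last step: the "dominated convergence" is applied with the dominating variable $Y_1{\hat a}_2$, which depends on $n$ through ${\hat a}_2$, so classical DCT does not literally apply; either note that the family $\{Y_1{\hat a}_2\}$ converges in $L^1$ (hence is uniformly integrable), or, more simply, observe that the quantity you integrate is a conditional probability, hence bounded by $1$ and $o_P(1)$, and invoke (\ref{fact}).
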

\begin{proof}[Proof of Lemma \ref{d-L}] Proof of (\ref{d-L21}).
We observe that the event ${\cal A}=\{d(v_1)\not=L\}$ occurs in the case where for some $2\le j\le n$
and some distinct $i_1,i_2\in [m]$ the event 
${\cal A}_{i_1,i_2,j}=
\{w_{i_1}\to v_1,\, w_{i_1}\to v_j,\, w_{i_2}\to v_1,\, w_{i_2}\to v_j\}$ occurs.
From the union bound and the inequality 
${\tilde \PP}({\cal A}_{i_1,i_2,j})\le m^{-2}n^{-2}X_{i_1}^2X_{i_2}^2Y_1^2Y_j^2$ we obtain
\begin{equation}\label{d-L1}
{\tilde \PP}({\cal A})
=
{\tilde \PP}
\left(\bigcup_{\{i_1, i_2\}\subset[m]}\bigcup_{2\le j\le n}{\cal A}_{i_1,i_2,j}\right)
\le n^{-1}{\hat b}_2Y_1^2 Q_X.
\end{equation}
Here $Q_X=m^{-2}\sum_{\{i_1,i_2\}\subset[m]}X_{i_1}^2X_{i_2}^2$. We note that
$Q_X$ and  $Y_1^2$ are stochastically bounded  and 
$n^{-1}{\hat b}_2=o_P(1)$ 
as $n\to+\infty$. Therefore, 
${\tilde \PP}({\cal A})=o_P(1)$. 
Now (\ref{fact}) implies (\ref{d-L21}).

\medskip

Proof of (\ref{d-L22}).
Let ${\overline {\cal A}}_1$ denote the complement event to ${\cal A}_1$.
By 
the union bound  and Markov's inequality 
\begin{displaymath}
\PP_1({\overline {\cal A}}_1)\le \sum_{i\in [m]}\PP_1(\lambda_{i1}\ge 1)
\le 
(nm)^{-1}Y_1^2\sum_{i\in [m]}\E X_i^2=n^{-1}a_2Y_1^2.
\end{displaymath}
Hence we obtain $\PP_1({\overline {\cal A}}_1)=o_P(1)$.  Now (\ref{fact}) implies $\PP({\overline {\cal A}}_1)=\E \PP_1({\overline {\cal A}}_1)=o(1)$.

\medskip

Proof of (\ref{d-L23}). Since the first bound of (\ref{d-L23}) follows from the second one, we only prove the latter.
Denote ${\hat X}_1=\max\{X_1, 1\}$ and ${\hat Y}_1=\max\{Y_1,1\}$.
We observe that $\E X_1^2<\infty$, $\E Y_1<\infty$ implies
\begin{displaymath}
\lim_{t\to+\infty}\E {\hat X}_1^2{\mathbb I}_{\{{\hat X}_1>t\}}=0,
\qquad
\lim_{t\to+\infty}\E {\hat Y}_1{\mathbb I}_{\{{\hat Y}_1>t\}}=0. 
\end{displaymath}
 Hence one can find
a strictly increasing function $\varphi:[1,+\infty)\to [0,+\infty)$ 
with $\lim_{t\to+\infty}\varphi(t)=+\infty$
such that
\begin{equation}\label{hat1}
\E {\hat X}_1^2\varphi({\hat X}_1)<\infty,
\qquad  
\E {\hat Y}_1\varphi({\hat Y}_1)<\infty.
\end{equation} 
In addition, we can choose $\varphi$  satisfying 
\begin{equation}\label{phi}
\varphi(t)<t
\qquad
{\text{and}}
\qquad
\varphi(st)
\le \varphi(t)\varphi(s), 
\qquad
\forall s,t\ge 1. 
\end{equation}
For this purpose we take a sufficiently slowly growing concave function 
$\psi:[0,+\infty)\to [0,+\infty)$ with $\psi(0)=0$ and define
$\varphi(t)=e^{\psi(\ln (t))}$. We note that the second inequality of (\ref{phi})
follows from the concavity property of $\psi$. We remark, that (\ref{phi})
implies
\begin{equation}\label{phi1}
 t/(st)=s^{-1}\le 1/\varphi(s)\le\varphi(t)/\varphi(st),
\qquad
s,t\ge 1.
\end{equation}

Let ${\hat S}_{XY}$ be defined as in  (\ref{S-XY}) above, but with
$\lambda_{ij}$ replaced by ${\hat\lambda}_{ij}={\hat X}_i{\hat Y}_j/\sqrt{mn}$.
We note, that $S_{XY}\le {\hat S}_{XY}$. Furthermore, from the inequalities 
\begin{equation}\label{phi2}
 \min\{1,{\hat\lambda}_{ij}\}
\le
\min\Bigl\{1,\frac{\varphi({\hat X}_i{\hat Y}_j)}{\varphi(\sqrt{mn})}\Bigr\}
\le
\frac{\varphi({\hat X}_i{\hat Y}_j)}{\varphi(\sqrt{mn})}
\le 
\frac{\varphi({\hat X}_i)\varphi({\hat Y}_j)}{\varphi(\sqrt{mn})}
\end{equation}
we obtain 
$S_{XY}\le {\hat S}_{XY}\le S^*_{XY}/\varphi(\sqrt{nm})$, 
where 
\begin{displaymath}
S^*_{XY}
=
(mn)^{-1}
\left(\sum_{1\le i\le m}{\hat X}^2_i\varphi({\hat X}_i)\right)
\left(\sum_{2\le j\le n}{\hat Y}_j\varphi({\hat Y}_j)\right).
\end{displaymath}
We remark that (\ref{phi}) and (\ref{phi1}) and  imply the third and the first inequality of 
(\ref{phi2}), respectively. 

Finally, the bound $\E S_{XY}=o(1)$ follows from the inequality 
$S_{XY}\le S^*_{XY}/\varphi(\sqrt{nm})$
and the fact  that $\E S^*_{XY}$ remains bounded as $n,m\to+\infty$, 
see  (\ref{hat1}).
\end{proof}


In the proof of Theorem \ref{T1} we use the following inequality 
 refered to as LeCam's lemma, see e.g., \cite{Steele}.
\begin{lem}\label{LeCamLemma} Let $S={\mathbb I}_1+ {\mathbb I}_2+\dots+ {\mathbb I}_n$ be the sum of independent random indicators
with probabilities $\PP({\mathbb I}_i=1)=p_i$. Let $\Lambda$ be Poisson random variable with mean $p_1+\dots+p_n$. The total variation
distance between the distributions $P_S$ of $P_{\Lambda}$ of $S$ and $\Lambda$
\begin{equation}\label{LeCam}
\sup_{A\subset \{0,1,2\dots \}}|\PP(S\in A)-\PP(\Lambda\in A)|= \frac{1}{2} \sum_{k\ge 0}|\PP(S=k)-\PP(\Lambda=k)|
\le
\sum_{i}p_i^2.
\end{equation}
\end{lem}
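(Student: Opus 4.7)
The approach is the classical coupling argument. The statement naturally splits into two parts: the equality between the $\sup_A$ and $\tfrac{1}{2}\sum_k|\cdot|$ expressions for the total variation distance, and the bound by $\sum_i p_i^2$. The equality is the standard identity for TV distance between two probability measures on a countable set; I would record it in one line by choosing the optimizing set $A^*=\{k:\PP(S=k)\ge\PP(\Lambda=k)\}$ and observing that $\sum_k(\PP(S=k)-\PP(\Lambda=k))=0$ forces the positive and negative parts of this signed sum to agree in absolute value.

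For the bound, I would rely on the standard coupling inequality $d_{TV}(\mu,\nu)\le\PP(X\ne Y)$ valid for any coupling $(X,Y)$ of $\mu,\nu$, and build a coupling of $S$ with a Poisson$(p_1+\cdots+p_n)$ variable $\Lambda$ index-by-index. For each $i$ I would couple $\mathbb{I}_i\sim\mathrm{Bernoulli}(p_i)$ with a Poisson$(p_i)$ variable $\Lambda_i$ on a common probability space, exploiting the inequality $e^{-p_i}\ge 1-p_i$ to fit the entire atom $\{\mathbb{I}_i=0\}$ inside $\{\Lambda_i=0\}$, aligning $\{\mathbb{I}_i=1,\Lambda_i=1\}$ with weight $p_ie^{-p_i}$, and distributing the remaining mass so that both marginals are correct. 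A short direct calculation then gives $\PP(\mathbb{I}_i\ne\Lambda_i)=p_i(1-e^{-p_i})\le p_i^2$, where the final inequality is just $1-e^{-x}\le x$.

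Taking these per-index couplings independently across $i=1,\dots,n$ preserves the law of $S=\sum_i\mathbb{I}_i$ and turns $\sum_i\Lambda_i$ into a Poisson variable with mean $p_1+\cdots+p_n$ by convolution of independent Poissons. A union bound combined with the coupling inequality then yields
\[
d_{TV}(S,\Lambda)\le\PP(S\ne\Lambda)\le\sum_{i}\PP(\mathbb{I}_i\ne\Lambda_i)\le\sum_{i}p_i^2,
\]
which is exactly the stated bound. The only mildly delicate step is the construction of the single-index coupling; everything else is a routine application of independence and the union bound, and I foresee no real obstacle.
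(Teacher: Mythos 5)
Your proposal is correct. Note that the paper does not prove this lemma at all: it is quoted as a known result (Le Cam's inequality) with a reference to Steele's survey, so there is no internal proof to compare against. The coupling argument you give is the standard one and is complete: the maximal per-index coupling of a Bernoulli$(p_i)$ with a Poisson$(p_i)$ variable (placing the whole atom $\{\mathbb{I}_i=0\}$ inside $\{\Lambda_i=0\}$, which is legitimate since $e^{-p_i}\ge 1-p_i$) indeed yields $\PP(\mathbb{I}_i\ne\Lambda_i)=p_i(1-e^{-p_i})\le p_i^2$, independence across indices makes $\sum_i\Lambda_i$ Poisson with mean $p_1+\dots+p_n$, and the coupling inequality plus the union bound give the stated bound; the identity between the supremum and the half-$\ell_1$ form of the total variation distance is the standard fact you describe.
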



\begin{proof}[Proof of Theorem \ref{T1}]

{\it The case (i).} We have $\PP(d(v_1)>\varepsilon)\le \PP(L>\varepsilon)$, for each $\varepsilon>0$. We prove that $\PP(L>\varepsilon)=o(1)$, for any $\varepsilon>0$.
In view of the identity $\PP(L>\varepsilon)=\E\PP_1(L>\varepsilon)$ and (\ref{fact}) it suffices to show that $\PP_1(L>\varepsilon)=o_P(1)$.
For this purpose we write, by the union bound and Markov's inequality,  
\begin{displaymath}
\PP_1(L>\varepsilon)
\le 
\sum_{1\le k\le m}\PP_1({\mathbb I}_k=1)
\le
\E_1\sum_{1\le k\le m}\lambda_{k1}
=
\sqrt{m/n}Y_1\E X_1
=
o_P(1).
\end{displaymath}

In cases (ii) and (iii) we apply (\ref{d-L21}). In view of 
(\ref{d-L21}) the random variables $d(v_1)$ and $L$ have the same asymptotic distribution (if any).
Hence, it suffices to
show 
the convergence in distribution of $L$.

{\it The case (ii)}.
Here we prove that  $L$ converges in distribution to (\ref{d*1}). 
 We first approximate $L$ by 
the  random variable $L_3=\sum_{k=1}^m\eta_k\xi_{3k}$. 
Then we show that $L_3$ converges in distribution to (\ref{d*1}).
Here $\eta_1,\dots, \eta_m$, $\xi_{3 1},\dots, \xi_{3 m}$ are 
conditionally independent (given $X,Y$) 
Poisson random variables with ${\tilde \E}\eta_k=\lambda_{k1}$ and 
${\tilde \E}\xi_{3 k}=X_k(n/m)^{1/2}b_1$. We assume, in addition, 
that given $X,Y$, the 
sequences $\{{\mathbb I}_{k}\}_{k=1}^m$ and $\{\xi_{3k}\}_{k=1}^m$ 
are conditionally independent.
 
Given $X,Y$, we generate independent Poisson random variables
$\xi_{11},\dots, \xi_{1m}$, $\Delta_{11},\dots, \Delta_{1m}$, with the conditional mean values
\begin{displaymath}
{\tilde \E}\xi_{1k}= \sum_{2\le j\le n}p_{kj},
\qquad 
{\tilde \E}\Delta_{1k}=\sum_{2\le j\le n}(\lambda_{kj}-p_{kj}),
\qquad
1\le k\le m.
\end{displaymath}
We assume that, given $X,Y$, these Poisson random variables are conditionally independent of the sequence 
$\{\eta_k\}_{k=1}^m$. We suppose, in addition, that $\{\eta_k\}_{k=1}^m$ is
conditionally  independent (given $X, Y_1$)  of the set of edges of $H$ 
that are not incident to 
$v_1$. 
We define  $\xi_{2k}=\xi_{1k}+\Delta_{1k}$ and observe that $\xi_{2k}$ has conditional (given $X,Y$) Poisson distribution with
the conditional mean value ${\tilde \E}{\xi}_{2k}=\sum_{2\le j\le n}\lambda_{kj}$.
Introduce the random variables 
\begin{displaymath}
L_0=\sum_{1\le k\le m}\eta_ku_k,
\qquad
L_1=\sum_{1\le k\le m}\eta_k\xi_{1k},
\qquad
L_2=\sum_{1\le k\le m}\eta_k\xi_{2k}.
\end{displaymath}
In order to show that $L$ and $L_3$ 
have the same asymptotic probability distribution (if any) we 
prove that 
\begin{eqnarray}\label{DeltaT}
&&
d_{TV}(L, L_0)=o(1),
\qquad
d_{TV}(L_0,L_1)=o(1),
\\
\label{L-L}
&&
\E|L_1-L_2|=o(1),
\qquad
\qquad
{\tilde L}_2-{\tilde L}_3=o_P(1).
\end{eqnarray}
Here ${\tilde L}_2$ and ${\tilde L}_3$ are marginals of the random 
vector $({\tilde L}_2, {\tilde L}_3)$ constructed below which has the 
property that ${\tilde L}_2$ has the same distribution as $L_2$ and ${\tilde L}_3$ has 
the same distribution as $L_3$.

Let us prove the first bound of (\ref{DeltaT}).
In view of (\ref{fact}) it suffices to show that
${\tilde d}_{TV}(L_0,L)=o_P(1)$. In order to prove the latter bound we apply the inequality
\begin{equation}\label{TV1}
 {\tilde d}_{TV}(L_0, L){\mathbb I}_{{\cal A}_1}\le n^{-1}Y_1^2{\hat a}_2
\end{equation} 
shown below. 
We remark that (\ref{TV1}) implies 
\begin{displaymath}
 {\tilde d}_{TV}(L_0,L)
 \le 
 {\tilde d}_{TV}(L_0,L) {\mathbb I}_{{\cal A}_1}+{\mathbb I}_{{\overline{\cal A}}_1}
 \le
 n^{-1}Y_1^2{\hat a}_2+{\mathbb I}_{{\overline{\cal A}}_1}=o_P(1).
\end{displaymath}
Here $n^{-1}Y_1^2{\hat a}_2=o_P(1)$, because $Y_1^2{\hat a}_2$ is stochastically bounded. Furthermore,  the bound
${\mathbb I}_{{\overline{\cal A}}_1}=o_P(1)$
follows from (\ref{d-L22}).

It remains to prove (\ref{TV1}).
We denote  
$L'_k=\sum_{i=1}^k{\mathbb I}_iu_i+\sum_{i=k+1}^m\eta_iu_i$ and write, by the triangle inequality, 
\begin{displaymath}
 {\tilde d}_{TV}(L_0,L)\le \sum_{k=1}^m{\tilde d}_{TV}(L'_{k-1}, L'_k).
\end{displaymath}
Then we estimate 
${\tilde d}_{TV}(L'_{k-1}, L'_k)
\le 
{\tilde d}_{TV}(\eta_k,{\mathbb I}_k)\le (nm)^{-1}Y_1^2X_k^2$.
Here the first inequality follows from the properties of the total variation distance. 
The second inequality follows from Lemma \ref{LeCamLemma} and the fact 
that on the event ${\cal A}_1$ we have
$p_{k1}=\lambda_{k1}$.

\medskip

Let us prove the second bound of (\ref{DeltaT}).
In view of (\ref{fact}) it suffices to  show that
  ${\tilde d}_{TV}(L_0,L_1)=o_P(1)$. We denote  
$L^*_k=\sum_{i=1}^k\eta_iu_i+\sum_{i=k+1}^m\eta_i\xi_{1i}$ and write, 
by the triangle inequality, 
\begin{equation}\label{dTV4}
 {\tilde d}_{TV}(L_0,L_1)\le \sum_{k=1}^m{\tilde d}_{TV}(L^*_{k-1}, L^*_k).
\end{equation}
Here
\begin{equation}\label{dTV41}
{\tilde d}_{TV}(L^*_{k-1}, L^*_k)\le {\tilde d}_{TV}(\eta_ku_k,\eta_k\xi_{1k})
\le 
{\tilde \PP}(\eta_k\not=0){\tilde d}_{TV}(u_k,\xi_{1k}).
\end{equation}
Now, invoking the inequalities  
${\tilde \PP}(\eta_k\not=0)=1-e^{-\lambda_{k1}}\le \lambda_{k1}$
and ${\tilde d}_{TV}(u_k,\xi_{1k})\le \sum_{j=2}^np^2_{kj}$, we obtain from 
(\ref{dTV4}), (\ref{dTV41})  and (\ref{d-L23}) that
\begin{displaymath}
 {\tilde d}_{TV}(L_0,L_1)\le \sum_{k=1}^m\lambda_{k1}\sum_{j=2}^np^2_{kj} 
\le 
Y_1S_{XY}=o_P(1).
\end{displaymath}

\medskip

Let us prove the first  bound of (\ref{L-L}).
We observe that 
\begin{displaymath}
 |L_2-L_1|=L_2-L_1
=
\sum_{1\le k\le m}\eta_k\Delta_{1k}
\end{displaymath}
and 
\begin{displaymath}
{\tilde \E}\sum_{1\le k\le m}\eta_k\Delta_{1k}
=
\sum_{1\le k\le m}\lambda_{k1}\sum_{2\le j\le n}
(\lambda_{kj}-1){\mathbb I}_{\{\lambda_{kj}>1\}}
\le Y_1S_{XY}. 
\end{displaymath}
We obtain $\E|L_2-L_1|\le \E Y_1\E S_{XY}=o(1)$, see
 (\ref{d-L23}).

\medskip

Let us prove the second  bound of (\ref{L-L}).  Given $X,Y$, generate 
independent Poisson random variables
$\xi'_{31},\dots,\xi'_{3m}, \Delta_{21},\dots,\Delta_{2m}, \Delta_{31},\dots, \Delta_{3m}$ which are conditionally independent of the sequence $\{\eta_k\}_{k=1}^m$ and have  the conditional mean values
\begin{displaymath}
{\tilde \E}\xi'_{3k}=X_k(n/m)^{1/2}b,
\qquad
{\tilde \E}\Delta_{2k}=X_k(n/m)^{1/2}\delta_{2},
\qquad
{\tilde \E}\Delta_{3k}=X_k(n/m)^{1/2}\delta_{3}.
\end{displaymath}
Here
$b=\min\{{\hat b}_1, b_1\}$, $\delta_{2}= {\hat b}_1-b$, $\delta_{3}=b_1-b$. We note that $\delta_2,\delta_3\ge 0$ and
observe that the random vector
\begin{displaymath}
(L'_2,L'_3),
\qquad
L'_2=\sum_{1\le k\le m}\eta_k(\xi'_{3k}+\Delta_{2k}), 
\qquad
L'_3=\sum_{1\le k\le m}\eta_k(\xi'_{3k}+\Delta_{3k})
\end{displaymath}
has the marginal distributions of $(L_2,L_3)$. In addition, we have 
\begin{equation}\label{L2-L3}
 {\tilde \E}|L'_2-L'_3|
=
(\delta_2+\delta_3)Y_1{\hat a}_2=|{\hat b}_1-b_1|Y_1{\hat a}_2=o_P(1).
\end{equation}
In the last step we used the fact that $Y_1{\hat a}_2=O_P(1)$ and ${\hat b}_1-b_1=o_P(1)$, 
by the law of large numbers.  
Finally, we show that (\ref{L2-L3}) implies
the bound $|L'_2-L'_3|=o_P(1)$. Denoting, for short, $H=|L_2'-L_3'|$ and 
$h={\tilde \E}{\mathbb I}_{\{H\ge \varepsilon\}}$ we write, for $\varepsilon\in (0,1)$,
\begin{equation}\label{HHHH}
 \PP(H\ge\varepsilon)
=
\E h
=
\E h
({\mathbb I}_{\{{\tilde \E}H\ge\varepsilon^2\}}
+
{\mathbb I}_{\{{\tilde \E}H<\varepsilon^2\}}).
\end{equation}
Using the simple inequality $h\le 1$ and the inequality,
$h\le \varepsilon^{-1}{\tilde \E}H$, which follows from Markov's inequality, we obtain
\begin{eqnarray}\nonumber
 \E h
{\mathbb I}_{\{{\tilde \E}H\ge\varepsilon^2\}}
&
\le 
&
\E 
{\mathbb I}_{\{{\tilde \E}H\ge\varepsilon^2\}}=\PP({\tilde \E}H\ge\varepsilon^2)=o(1),
\\
\nonumber
\E h {\mathbb I}_{\{{\tilde \E}H<\varepsilon^2\}}
&
\le 
&
\E (\varepsilon^{-1}{\tilde \E}H){\mathbb I}_{\{{\tilde \E}H<\varepsilon^2\}}
< 
\varepsilon.
\end{eqnarray}
Invoking these inequalities in (\ref{HHHH}) we obtain $\PP(H\ge \varepsilon)<\varepsilon+o(1)$.
Hence $H=o_P(1)$. 


Now we prove that  $L_3$ converges in distribution to (\ref{d*1}).
 Introduce the random variable ${\bar L}=\sum_{1\le k\le m}\eta_k{\bar \xi}_k$, where,
given $X,Y$, the random variables ${\bar \xi}_1,\dots, {\bar \xi}_m$ 
are conditionally independent of $\{\eta_k\}_{k=1}^m$ and have the 
conditional mean values ${\tilde \E}{\bar \xi}_k=X_k\beta^{-1/2}b_1$. 
Proceeding as in the proof of the second bound of 
(\ref{L-L}) above, we construct a random vector 
$(L_3'', {\bar L}')$ 
with the same marginals as $(L_3, {\bar L})$ 
and such that 
\begin{equation}\label{LLL}
{\tilde \E}|L''_3- {\bar L}'|
\le 
|1-(m/n)^{1/2}\beta^{-1/2}|Y_1b_1{\hat a}_2=o_P(1).
\end{equation}
In the last step we used the fact that $Y_1{\hat a}_2=O_P(1)$ and $m/n\to\beta$. 
Now, (\ref{LLL}) implies $L''_3-{\bar L}'=o_P(1)$. 
We conclude that $L_3$ and ${\bar L}$ have the same asymptotic distribution
(if any). 

Next  we prove that ${\bar L}$ converges in distribution to (\ref{d*1}). For this purpose we 
 show that  $\E e^{it{\bar L}}\to\E e^{itd_*}$, 
for each $t\in (-\infty,+\infty)$. Denote 
$\Delta(t)=e^{it{\bar L}}-e^{itd_*}$. We shall show below 
that, for any real $t$ and any realized value $Y_1$ 
there exists a positive constant 
$c=c(t,Y_1)$ such that for every $0<\varepsilon<0.5$ 
we have
\begin{equation}\label{epsilon}
 \limsup_{n,m\to+\infty}|\E(\Delta(t)|Y_1)|<c\varepsilon.
\end{equation}
Clearly, (\ref{epsilon}) implies $\E(\Delta(t)| Y_1)=o(1)$.
 This fact together with the simple inequality
 $|\Delta(t)|\le 2$  yields $\E\Delta(t)=o(1)$, 
by Lebesgue's dominated convergence theorem.
Observing that  $\E\Delta(t)=\E e^{it{\bar L}}-\E e^{itd_*}$ we conclude that
$\E e^{it{\bar L}}\to\E e^{itd_*}$.

\bigskip

We fix $0<\varepsilon<0.5$ and prove (\ref{epsilon}). 
Before the proof we introduce  some notation. Denote
\begin{eqnarray}\nonumber
 &&
f_\tau(t)=\E e^{it\tau_1},
\qquad
{\bar f}_{\tau}(t)=\sum_{r\ge 0}e^{itr}{\bar p}_r, 
\qquad
{\bar p}_r
=
{\bar \lambda}^{-1}\sum_{1\le k\le m}\lambda_{k1}{\mathbb I}_{\{{\bar\xi}_k=r\}},
\qquad
{\bar \lambda}=\sum_{k=1}^m\lambda_{k1},
\\
\nonumber
&&
\delta=({\bar f}_\tau(t)-1){\bar \lambda}-(f_{\tau}(t)-1)\lambda_1,
\qquad
f(t)=\E_1e^{itd_*}, 
\qquad
{\bar f}(t)={\bar\E}e^{it{\bar L}}.
\end{eqnarray}
Here ${\bar \E}$ denotes the conditional expectation given $X,Y$ 
and ${\bar\xi}_1,\dots, {\bar\xi}_m$.
Introduce the event
${\cal D}=\{|{\hat a}_1-a_1|<\varepsilon\min\{1, a_1\}\}$ and let ${\overline{\cal D}}$ denote the complement event.  
Furthermore, select the number 
$T>1/\varepsilon$  such that $\PP(\tau_1\ge T)< \varepsilon$. By $c_1, c_2, \dots$ 
we denote positive numbers which do not depend on $n,m$.

We observe that, given $Y_1$, the conditional distribution of $d_*$ is the 
compound Poisson distribution
with the characteristic function $f(t)=e^{\lambda_1(f_{\tau}(t)-1)}$. Similarly,
given $X,Y$ 
and ${\bar\xi}_1,\dots, {\bar\xi}_m$, 
the conditional distribution of ${\bar L}$ is the 
compound Poisson distribution
with the characteristic function ${\bar f}(t)=e^{{\bar \lambda}({\bar f}_{\tau}(t)-1)}$.
In the proof of (\ref{epsilon}) we exploit the convergence
${\bar\lambda}\to\lambda_1$ and ${\bar f}_{\tau}(t)\to f_{\tau}(t)$. In what follows 
we assume that $m,n$ are so 
large that $\beta\le 2m/n\le 4\beta$.

Let us prove  (\ref{epsilon}). 
We write 
\begin{displaymath}
\E_1\Delta(t)=I_{1}+I_{2},
\qquad
I_{1}=\E_1\Delta(t){\mathbb I}_{\cal D},
\qquad
I_2=\E_1\Delta(t){\mathbb I}_{\overline{\cal D}}.         
\end{displaymath}
Here $|I_2|\le 2\PP_1({\overline {\cal D}})=2\PP({\overline {\cal D}})=o(1)$, by the law of large numbers.
Next we estimate $I_{1}$. Combining the identity $\E_1\Delta(t)=\E_1f(t)(e^{\delta}-1)$
with the inequalities $|f(t)|\le 1$ and $|e^{s}-1|\le |s|e^{|s|}$, 
we obtain
\begin{equation}
 |I_1|\le \E_1|\delta|e^{|\delta|}{\mathbb I}_{\cal D}
\le 
c_1\E_1|\delta|{\mathbb I}_{\cal D}.
\end{equation}
Here we estimated $e^{|\delta|}\le e^{8\lambda_1}=:c_1$ using the inequalities
 \begin{displaymath}
 |\delta|\le 2{\bar \lambda}+2\lambda_1,
\qquad
{\bar \lambda}=Y_1(m/n)^{1/2}{\hat a}_1\le 3\lambda_1. 
 \end{displaymath}
We remark that the last  inequality holds for 
$m/n\le 2\beta$ provided that the event ${\cal D}$ occurs.
Finally, we show that $\E_1|\delta|{\mathbb I}_{\cal D}\le (c_2+\lambda_1c_3+\lambda_1c_4)\varepsilon+o(1)$. 
We first write
\begin{displaymath}
\delta
=
({\bar f}_{\tau}(t)-1)({\bar\lambda}-\lambda_1)+({\bar f}_{\tau}(t)-f_{\tau}(t))\lambda_1,
\end{displaymath}
and estimate $|\delta|\le 2|{\bar\lambda}-\lambda_1|+\lambda_1|{\bar f}_{\tau}(t)-f_{\tau}(t)|$.
From the inequalities $|{\hat a}_1-a_1|<\varepsilon$ and $m/n\le 2\beta$ we obtain
\begin{displaymath}
 |{\bar\lambda}-\lambda_1|
\le 
Y_1|{\hat a}_1-a|(m/n)^{1/2}
+Y_1a_1|(m/n)^{1/2}-\beta^{1/2}|
\le 
2Y_1\beta^{1/2}\varepsilon+o(1).
\end{displaymath}
Hence $\E_1|{\bar\lambda}-\lambda_1|{\mathbb I}_{\cal D}\le c_2\varepsilon+o(1)$,
where $c_2=2Y_1\beta^{1/2}$.
We secondly  show that
\begin{displaymath}
 \E_1|{\bar f}_{\tau}(t)-f_{\tau}(t)|{\mathbb I}_{\cal D}\le (c_3+c_4)\varepsilon+o(1).
\end{displaymath}
To this aim we  split 
\begin{displaymath}
 {\bar f}_{\tau}(t)-f_{\tau}(t)=\sum_{r\ge 0}e^{itr}({\bar p}_r-p_r)=R_1-R_2+R_3,
\end{displaymath}
and estimate separately the terms
\begin{displaymath}
 R_1=\sum_{r\ge T}e^{itr}{\bar p}_r,
\qquad
R_2=\sum_{r\ge T}e^{itr}p_r,
\qquad 
R_3=\sum_{0\le r<T}e^{itr}({\bar p}_r-p_r).
\end{displaymath}
Here we denote
$p_r=\PP(\tau_1=r)$. The upper bound for $R_2$ follows by the choice of $T$ 
\begin{displaymath}
 |R_2|\le \sum_{r\ge T}p_r=\PP(\tau_1\ge T)< \varepsilon.
\end{displaymath}
Next,
combining the identity
${\bar p}_r=({\hat a}_1m)^{-1}\sum_{1\le k\le m}X_k{\mathbb I}_{\{{\bar \xi}_k=r\}}$
with the inequalities
\begin{eqnarray}
 |R_1|
\le 
({\hat a}_1m)^{-1}
\sum_{r\ge T}\sum_{1\le k\le m}X_k{\mathbb I}_{\{{\bar \xi}_k=r\}}
=
({\hat a}_1m)^{-1}\sum_{1\le k\le m}X_k{\mathbb I}_{\{{\bar \xi}_k\ge T\}}
\end{eqnarray}
and ${\hat a}_1^{-1}{\mathbb I}_{\cal D}\le 2a_1^{-1}$, we estimate
\begin{equation}\nonumber
 \E_1|R_1|{\mathbb I}_{\cal D}\le 2a_1^{-1}\E_1(X_1{\mathbb I}_{\{{\bar \xi}_1\ge T\}})
\le 
2a_1^{-1}T^{-1}\E_1(X_1{\bar\xi}_1)
=
2a_1^{-1}a_2b_1\beta^{-1/2}\varepsilon.
\end{equation}
Hence $ \E_1|R_1|{\mathbb I}_{\cal D}\le c_4\varepsilon$, where $c_4=2a_1^{-1}a_2b_1\beta^{-1/2}$.

Now we estimate $R_3$. 
We denote $p'_r=({\hat a}_1/a_1){\bar p}_r$
and observe that the inequality $|{\hat a}_1-a_1|\le \varepsilon a_1$ implies
$|{\hat a}_1a_1^{-1}-1|\le \varepsilon$ and
\begin{displaymath}
 |\sum_{0\le r\le T}e^{itr}({\bar p}_r- p'_r)|\le \varepsilon\sum_{0\le r\le T}
{\bar p}_r\le \varepsilon.
\end{displaymath}
In the last inequality we use the fact that the probabilities  $\{{\bar p}_r\}_{r\ge 0}$ 
sum up to $1$. It follows now that
\begin{displaymath}
 |R_3|{\mathbb I}_{\cal D}\le \varepsilon+\sum_{0\le r\le T}|p'_r-p_r|.
\end{displaymath}
Furthermore, observing that 
$\E_1p'_r=a^{-1}\E_1X_k{\mathbb I}_{\{{\bar\xi}_k=r\}}=p_r$, for $1\le k\le m$,
 we obtain 
\begin{displaymath}
 \E_1|p'_r-p_r|^2
= 
m^{-1}\E_1|a^{-1}X_1{\mathbb I}_{\{{\bar\xi}_1=r\}}-p_r|^2\le m^{-1}a_1^{-2}\E X_1^2.
\end{displaymath}
Hence, $\E_1|p'_r-p_r|=O(m^{-1/2})$. We conclude that
\begin{displaymath}
\E_1 |R_3|{\mathbb I}_{\cal D}
\le 
\varepsilon+O(|T|m^{-1/2})=\varepsilon+o(1).
\end{displaymath}

{\it The case (iii)}.  We start with introducing some notation.
Denote $m/n=\beta_n$. Given $\varepsilon\in (0,1)$ 
introduce random variables
\begin{displaymath}
\gamma=\sum_{1\le k\le m}{\mathbb I}'_{k}\lambda_{k1}\gamma_{k},
\qquad
\gamma_{k}=X_k\beta_n^{-1/2}b_1{\mathbb I}'_{k},
\qquad
{\mathbb I}'_{k}={\mathbb I}_{\{X_k\beta_n^{-1/2}b_1<\varepsilon\}}.
\end{displaymath}
 Given $X,Y$, 
let ${\tilde {\mathbb I}}_{1},\dots, {\tilde {\mathbb I}}_{m}$ be conditionally independent
Bernoulli random variables with success probabilities 
\begin{displaymath}
{\tilde \PP}({\tilde {\mathbb I}}_{k}=1)=1-{\tilde \PP}({\tilde {\mathbb I}}_{k}=0)=\gamma_{k}.
\end{displaymath}
We assume that, given $X,Y$, 
the sequences 
$\{{\mathbb I}_k\}_{k=1}^m$, 
$\{{\tilde {\mathbb I}}_{k}\}_{k=1}^m$  and 
$\{\xi_{3 k}\}_{k=1}^m$
are conditionally independent. Introduce random variables
\begin{displaymath}
L_4=\sum_{1\le k\le m}{\mathbb I}_k\xi_{3 k},
\qquad
L_5=\sum_{1\le k\le m}{\mathbb I}_k{\mathbb I}'_{k}\xi_{3 k},
\qquad
L_6=\sum_{1\le k\le m}{\mathbb I}_k{\tilde {\mathbb I}}_{k}.
\end{displaymath}
Furthermore, we define the random variable $L_7$ as follows. We first generate $X,Y$. 
Then, given $X,Y$, we
generate a Poisson random variable with the conditional mean value 
$\gamma$. The realized value of the Poisson random variable is denoted $L_7$. Thus, we have
$\PP(L_7=r)=\E e^{-\gamma}\gamma^r/r!$, for $r=0,1,\dots$.

We note that $L$ and $L_3$ have the same asymptotic distribution (if any), 
by (\ref{DeltaT}), (\ref{L-L}). Now we prove 
that $L_3$ converges in distribution to $\Lambda_3$. For this purpose we show  that
for any $\varepsilon\in (0,1)$
\begin{eqnarray}\label{TL1}
&&
d_{TV}(L_3, L_4)=o(1),
\qquad
\E(L_4-L_5)=o(1),
\\
\label{TL2}
&&
d_{TV}(L_5, L_6)\le a_2b_1^2 \varepsilon,
\qquad
d_{TV}(L_6,L_7)=o(1),
\\
\label{TL3}
&&
\E e^{itL_7}-\E e^{it\Lambda_3}=o(1).
\end{eqnarray}

Let us prove (\ref{TL1}), (\ref{TL2}), (\ref{TL3}). The first bound of (\ref{TL1}) 
is obtained in the same way as the first bound of (\ref{DeltaT}). To show the second 
bound  of (\ref{TL1}) we invoke the   inequality
\begin{displaymath}
{\tilde \E}(L_4-L_5)
=
\sum_{1\le k\le m}(1-{\mathbb I}_k'){\tilde \E}{\mathbb I}_{k1}{\tilde \E}\xi_{3k}
\le
Y_1b_1m^{-1}\sum_{1\le k\le m}(1-{\mathbb I}_k')X_k^2
\end{displaymath}
and obtain
\begin{displaymath}
\E (L_4-L_5)=\E{\tilde  \E}(L_4-L_5)
\le 
b_1^2\E X_1^2{\mathbb I}_{\{X_1\ge \varepsilon b_1^{-1}\beta_n^{1/2}\}}=o(1). 
\end{displaymath}
We note that the right hand side tends to zero since $\beta_n\to+\infty$.

Let us prove the first inequality of (\ref{TL2}).
Proceeding as in (\ref{dTV4}), (\ref{dTV41}) and using the 
identity ${\tilde {\mathbb I}}_k={\tilde {\mathbb I}}_k{\mathbb I}_k'$  we write
\begin{displaymath}
 {\tilde d}_{TV}(L_5, L_6)
\le 
\sum_{1\le k\le m}
{\mathbb I}_k'
{\tilde \PP}({\mathbb I}_k\not=0)
{\tilde d}_{TV}(\xi_{3k}, {\tilde {\mathbb I}}_k).
\end{displaymath}
Next, we estimate 
${\mathbb I}_k'{\tilde d}_{TV}(\xi_{3k}, {\tilde {\mathbb I}}_k)\le \gamma_k^2$, by
LeCam's inequality (\ref{LeCam}), and invoke the inequality
${\tilde \PP}({\mathbb I}_k\not=0)\le \lambda_{k1}$. 
We obtain
\begin{displaymath}
 {\tilde d}_{TV}(L_5, L_6)
\le 
\sum_{1\le k\le m}
{\mathbb I}_k'\lambda_{k1}\gamma_k^2
\le 
\varepsilon\sum_{1\le k\le m}
{\mathbb I}_k'\lambda_{k1}\gamma_k
\le 
\varepsilon Y_1b_1{\hat a}_2.
\end{displaymath}
Here we estimated $\gamma_k^2\le \varepsilon\gamma_k$. Now the inequalities
$d_{TV}(L_5,L_6)\le \E{\tilde d}_{TV}(L_5,L_6)\le a_2b_1^2\varepsilon$
imply the first relation of (\ref{TL2}).

Let us prove the second relation of (\ref{TL2}).
In view of  
(\ref{fact}) it suffices to show that ${\tilde d}_{TV}(L_6,L_7)=o_P(1)$.
For this purpose we write 
\begin{displaymath}
{\tilde d}_{TV}(L_6,L_7)\le 
{\mathbb I}_{{\cal A}_1}{\tilde d}_{TV}(L_6,L_7)+{\mathbb I}_{{\overline {\cal A}}_1},
\end{displaymath}
 where
${\mathbb I}_{{\overline {\cal A}}_1}=o_P(1)$, see (\ref{d-L22}), and estimate 
using  LeCam's inequality (\ref{LeCam})
\begin{displaymath}
{\mathbb I}_{{\cal A}_1}{\tilde d}_{TV}(L_6,L_7)
\le 
{\mathbb I}_{{\cal A}_1}\sum_{1\le k\le m}{\tilde \PP}^2({\mathbb I}_k{\tilde {\mathbb I}}_k=1){\mathbb I}_k'
\le 
b_1^2Y_1^2m^{-2}{\hat a}_4=o_P(1). 
\end{displaymath}
Here we used the fact that $\E X_1^2<\infty$ implies $m^{-2}{\hat a}_4=o_P(1)$.

Finally, we show (\ref{TL3}). We write
${\tilde \E} e^{itL_7}=e^{\gamma(e^{it}-1)}$ and observe that 
\begin{equation}\label{Y1gamma}
 Y_1b_1a_2-\gamma=o_P(1).
\end{equation}
Furthermore, since for any real $t$ the function $z\to e^{z(e^{it}-1)}$ is bounded and uniformly continuous for $z\ge 0$, 
we conclude that (\ref{Y1gamma}) implies the convergence 
\begin{displaymath}
 \E e^{itL_7}=\E e^{\gamma(e^{it}-1)}\to\E e^{Y_1b_1a_2(e^{it}-1)}=\E e^{it\Lambda_3}.
\end{displaymath}
It remains to prove (\ref{Y1gamma}). 
We write $Y_1b_1a_2-\gamma=Y_1b_1(a_2-{\hat a}_2)+Y_1b_1{\hat a}_2-\gamma$ and 
note that
$a_2-{\hat a}_2=o_P(1)$, by the law of large numbers, and
\begin{displaymath}
 0\le \E (Y_1b_1{\hat a}_2-\gamma)
=
b_1^2\E X_1^2{\mathbb I}_{\{X_1\ge \varepsilon b_1^{-1}\beta_n^{1/2}\}}=o(1).
\end{displaymath}

\end{proof}

\bigskip
 
{\it Acknowledgement}. 
Research of M. Bloznelis
was supported  by the  Research Council of Lithuania grant MIP-053/2011.  



\end{document}